\newtheorem{thm}{Theorem}[section]
\newtheorem{lem}[thm]{Lemma}
\newtheorem{cor}[thm]{Corollary}
\newtheorem{problem}[thm]{Problem}
\newtheorem{re}[thm]{Result}
\newtheorem{question}[thm]{Question}
\newtheorem{example}[thm]{Example}
\newtheorem{remark}[thm]{Remark}
\newcommand{\Z}{\mathbb{Z}}
\newcommand{\F}{\mathbb{F}}
\newcommand{\Q}{\mathbb{Q}}
\newcommand{\R}{\mathbb{R}}
\begin{document}
\title{Unique Differences in Symmetric Subsets of $\F_p$}
\author{Tai Do Duc\\ Division of Mathematical Sciences\\
School of Physical \& Mathematical Sciences\\
Nanyang Technological University\\
Singapore 637371\\
Republic of Singapore\\[5mm]
Bernhard Schmidt\\ Division of Mathematical Sciences\\
School of Physical \& Mathematical Sciences\\
Nanyang Technological University\\
Singapore 637371\\
Republic of Singapore}
\date{}

\maketitle

 \begin{abstract}
Let $p$ be a prime and let $A$ be a subset of $\F_p$ with $A=-A$
and $|A\setminus\{0\}| \le 2\log_3(p)$. Then there is an element of
$\F_p$ which has a unique representation as a difference
of two elements of $A$.
 \end{abstract}

%%%%%%%%%%%%%%%%%%%%%%%%%%%%%%%%%%%%%%%%%%%%%%%%%%%%%%%%%%%%%%%

\section{Introduction}
Let $p$ be a prime and let $\F_p$ denote the field with $p$ elements.
Let $A$ be a nonempty subset of $\F_p$.
We say that $A$ has a \textbf{unique difference}
if there is $x \in \F_p$ such that
there is exactly one ordered pair $(a,b)$, $a,b\in A$, with $x=a-b$.
Unique sums are defined similarly.
A subset $B$ of $\F_p$ is called \textbf{symmetric} if $B=-B$.

\medskip

According to \cite{str}, the following problem was first proposed by W.\ Feit.

\medskip

\begin{problem}
Given a prime $p$,
what is the largest number $f(p)$ such that every subset of $\F_p$
with at most $f(p)$ elements has a unique difference{\rm ?}
\end{problem}

\medskip

Let $\log_n(\cdot)$ denote the logarithm with base $n$.
Straus \cite{str} showed $f(p)\ge 1+\log_4(p-1)$.
This result was improved by Browkin, Divis, and Schinzel \cite{bro}
who obtained
\begin{equation} \label{browkin1}
f(p)\ge \log_2p,
\end{equation} which is the best known
lower bound for $f(p)$. It is not known whether this bound is
asymptotically sharp.

\medskip

In  \cite[Thm.\ 2]{str}, subsets of $\F_p$ were constructed which do
not have unique differences and are of cardinality $(2+o(1))\log_3(p)$.
These sets are symmetric.  Thus \cite[Thm.\ 2]{str} implies
\begin{equation} \label{straus}
g(p)\le (2+o(1))\log_3(p)
\end{equation} where
$g(p)$ denotes the largest number such that every \textit{symmetric}
subset of $\F_p$ with at most $g(p)$ elements has a unique difference.

\medskip

Note that a symmetric subset of $\F_p$ has a unique difference if
and only if it has a unique sum. Thus we could as well formulate
the results of this paper in terms of unique sums.

\medskip

The above-mentioned result of Browkin, Divis, and Schinzel
implies
\begin{equation} \label{browkin}
g(p)\ge \log_2p.
\end{equation}
The following theorem is the main result of this paper. It implies $g(p)\ge 2\log_3(p)$,
which is a substantial improvement upon (\ref{browkin}).

\begin{thm} \label{main}
Let $p\ge 5$ be a prime and let $A$ be a symmetric subset
of $\F_p$ with $|A\setminus\{0\}| \le 2\log_3(p)$.
Then $A$ has a unique difference.
\end{thm}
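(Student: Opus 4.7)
Suppose for contradiction that no difference of $A$ is unique. The symmetry $A=-A$ together with $p\ge 5$ forces $|A\setminus\{0\}|$ to be even, so write $2k=|A\setminus\{0\}|$ and $A\setminus\{0\}=\{\pm b_1,\ldots,\pm b_k\}$. The starting observation is that any unique difference $x\ne 0$ must be of the form $x=2a$ for some $a\in A$: the involution $(y,z)\mapsto(-z,-y)$ acts on the set of representations $x=y-z$, and if that set is a singleton its unique element must be fixed, forcing $y=-z$. Hence the no-unique-difference assumption is equivalent to the statement that, for every $a\in A\setminus\{0\}$, the equation $y-z=2a$ admits at least one solution $(y,z)\in A^2$ besides $(a,-a)$.

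The heart of the plan is to upgrade this pointwise statement to a global counting contradiction via the generating element
\[
G(X) \;=\; \prod_{i=1}^{k}\bigl(X^{b_i}+1+X^{-b_i}\bigr) \;=\; \sum_{\varepsilon\in\{-1,0,1\}^k}X^{\varepsilon_1 b_1+\cdots+\varepsilon_k b_k}
\]
of the group algebra $\mathbb{Z}[\F_p]\cong \mathbb{Z}[X]/(X^p-1)$: its $3^k$ monomials are indexed by the signed $\{-1,0,1\}$-combinations of the $b_i$. The hypothesis $|A\setminus\{0\}|\le 2\log_3 p$ gives exactly $3^k\le p$, so $G$ lives at the threshold of fitting inside $\F_p$. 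I would aim to show that the no-unique-difference hypothesis forces these $3^k$ signed sums to be pairwise distinct modulo $p$, and then to exhibit one additional element of $\F_p$ that must also appear as such a sum, giving the contradiction $3^k>p$. A natural implementation is by induction on $k$: the base case $k=1$ is immediate, since for $p\ge 5$ the element $2b_1=b_1-(-b_1)$ has no other representation in $A^2$; for the inductive step one peels off the pair $\{\pm b_k\}$ and applies the inductive hypothesis to $A'=A\setminus\{\pm b_k\}$, using the extra representations that $\pm b_k$ is forced to supply in order to close the loop.

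The principal obstacle is this bridging step. A naive collision-counting only gives $3^k\le p$, which is consistent with, rather than contrary to, the hypothesis; sharpening it by one is what the argument really requires. Concretely, each potential collision $\sum\varepsilon_i b_i=\sum\varepsilon'_i b_i$ with $\varepsilon\ne\varepsilon'$ has to be traced back to a concrete element $a\in A\setminus\{0\}$ whose double $2a$ then admits only the trivial representation $(a,-a)$, contradicting the assumption. Carrying out this collision-to-uniqueness reduction robustly across all patterns of $(\varepsilon,\varepsilon')$, together with the extremal off-by-one accounting that converts $3^k\le p$ into the strict $3^k>p$ needed to finish, is where I expect the real work of the proof to lie.
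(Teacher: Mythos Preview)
Your plan isolates the right reduction --- only elements $2a$ with $a\in A\setminus\{0\}$ can be unique differences --- but the ``bridging step'' you flag is not a detail to be filled in: it is the whole proof, and the direction you propose for it is actually wrong. The induction on $k$ fails at once, since a unique difference of $A'=A\setminus\{\pm b_k\}$ need not stay unique in $A$; the new pair $\pm b_k$ can supply fresh representations. More seriously, the implication ``no unique difference $\Rightarrow$ the $3^k$ signed sums are pairwise distinct'' points the wrong way. The hypothesis \emph{produces} relations: for each $i$ there is an equation $2b_i=\pm b_{\sigma(i)}\pm b_{\tau(i)}$, and one checks (this is the paper's type analysis together with the acyclicity in Lemma~\ref{graph}) that not all of these can be of the degenerate ``type~1'' shape $3b_i=\pm b_{\sigma(i)}$ when $3^k\le p$. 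Hence some equation has $i,\sigma(i),\tau(i)$ pairwise distinct, and then $b_i$ and $-b_i\pm b_{\sigma(i)}\pm b_{\tau(i)}$ are two distinct $\{-1,0,1\}$-combinations with the same value. So inside your proof by contradiction the $3^k$ sums \emph{do} collide; you cannot hope to establish distinctness as an intermediate step, and there is no visible ``one extra element'' either. Naive collision counting will never push past $3^k\le p$.

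The paper's argument is entirely different and does not touch $G(X)$. It assembles the $m$ relations $2a_i\pm a_{\sigma(i)}\pm a_{\tau(i)}=0$ into an integer $m\times m$ matrix $M$ with entries in $\{0,\pm1,2,3\}$ and shows, via the Smith Normal Form of $M$ over $\Z$, that $p$ divides every nonzero $r\times r$ minor of $M$, where $r=\mathrm{rank}_{\Q}(M)$: the point is that if $p\nmid d_r$ then one can lift $(a_1,\ldots,a_m)$ to an integer solution $y$ of $My=0$, and over $\Z$ the coordinate of largest absolute value makes $2y_k$ a unique difference of $\{\pm y_1,\ldots,\pm y_m\}$, a contradiction. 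It then bounds some nonzero $r\times r$ minor by $3^r$ using Gram (volume) estimates. The delicate part is that rows from type~1 equations $3x_i\pm x_{\sigma(i)}=0$ have Euclidean norm $\sqrt{10}>3$; these are shown to organise into vertex-disjoint directed paths, along which the Gram determinant is computed exactly (Lemma~\ref{type1det}) and, when combined with the remaining rows of norm at most $\sqrt{6}$, still fits under $3^r$. This yields $p\le 3^r\le 3^m<p$, the desired contradiction.
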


In view of Straus' result (\ref{straus}), Theorem \ref{main} is sharp in the following sense.

\begin{cor}\label{sharpness}
We have $g(p)\ge 2\log_3(p)$ for every prime $p$.
Moreover, for every $\varepsilon>0$, there exists a constant $C(\varepsilon)$
such that
\begin{equation} \label{sharpness1}
 g(p) \le (2+\varepsilon)\log_3(p)
\end{equation}
for every prime $p>C(\varepsilon)$.
\end{cor}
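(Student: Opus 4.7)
The plan is to derive both inequalities directly from results already stated in the excerpt. For the lower bound $g(p)\ge 2\log_3(p)$, let $A$ be any symmetric subset of $\F_p$ with $|A|\le 2\log_3(p)$. Since $|A\setminus\{0\}|\le |A|\le 2\log_3(p)$, the hypothesis of Theorem~\ref{main} is satisfied whenever $p\ge 5$, so $A$ has a unique difference. This already gives the desired bound for all primes $p\ge 5$; the remaining cases $p=2,3$ can be disposed of by direct inspection, since $\F_2$ and $\F_3$ have only a handful of symmetric subsets and each nonempty one is easily verified to contain a unique difference.

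For the upper bound, I would simply quote Straus' construction (\ref{straus}), which provides symmetric sets of cardinality $(2+o(1))\log_3(p)$ that do not admit a unique difference, and hence gives $g(p)\le(2+o(1))\log_3(p)$. Unwinding the definition of the little-$o$ notation turns this into the quantified statement (\ref{sharpness1}): for every $\varepsilon>0$ there is a constant $C(\varepsilon)$ such that $g(p)\le(2+\varepsilon)\log_3(p)$ for every prime $p>C(\varepsilon)$.

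There is no genuine obstacle here, since the deep work is done in Theorem~\ref{main} and Straus' theorem cited as (\ref{straus}); the corollary is essentially bookkeeping. The one point worth highlighting is the distinction between $|A|$ and $|A\setminus\{0\}|$: Theorem~\ref{main} bounds the latter, while the corollary is naturally phrased in terms of the former. Because $|A\setminus\{0\}|\le|A|$, the hypothesis of Theorem~\ref{main} is implied by $|A|\le 2\log_3(p)$, so the implication goes through directly with no case analysis depending on whether $0\in A$.
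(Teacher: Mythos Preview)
Your argument is exactly what the paper has in mind: the corollary is stated there without proof, immediately after Theorem~\ref{main}, as a direct consequence of that theorem together with Straus' bound~(\ref{straus}), and your write-up simply makes this explicit.

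One small slip in the handling of the small primes: it is not true that \emph{every} nonempty symmetric subset of $\F_2$ or $\F_3$ has a unique difference. The whole groups $\F_2=\{0,1\}$ and $\F_3=\{0,1,2\}$ do not, since every element of $\F_p$ then occurs as a difference exactly $p$ times. What you actually need is only that the symmetric subsets of size at most $\lfloor 2\log_3(p)\rfloor$ have a unique difference; for $p=3$ this means sets of size at most~$2$, namely $\{0\}$ and $\{1,2\}$, and those are fine. For $p=2$ one computes $g(2)=1$ while $2\log_3(2)\approx 1.26$, so the literal inequality $g(p)\ge 2\log_3(p)$ actually fails there if $g$ is read as integer-valued; this is a wrinkle in the paper's statement of the corollary rather than a defect in your reasoning, and the intended content---that every symmetric $A$ with $|A|\le 2\log_3(p)$ has a unique difference---does hold for $p=2$ as well.
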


Results like (\ref{browkin1}) and Theorem \ref{main} have applications in various areas,
see \cite{bro, leu, lev, lox1, ned1, ned2, ned3}, for instance.
In the last section, we present a new application to cyclotomic integers $X$
for which $|X|^2$ is an integer.

%%%%%%%%%%%%%%%%%%%%%%%%%%%%%%%%%%%%%%%%%%%%%%%%%%%%%%%%%%%%%%%

\section{Preliminaries}
In this section, we state some well known results which will be needed later.
We include  proofs for the convenience of the reader.
For a ring $R$, let $M_{m,n}(R)$ denote the set of $m\times n$ matrices with entries from $R$.
The Euclidean norm of $x\in \R^n$ is denoted by $||x||$.
\begin{re} \label{QR}
Let $m\le n$ and let  $A\in M_{m,n}(\R)$ with rows $r_1,\ldots,r_m$.
Set $d(1)=||r_1||$. For $2\le j\le m$,  let $d(j)$ be the distance of $r_j$ from the subspace
of $\R^n$ spanned by $r_1,\ldots,r_{j-1}
$.
We have
\begin{equation} \label{QR1}
\det(AA^T)=\prod_{j=1}^{m}d(j)^2.
\end{equation}
\end{re}
\begin{proof}
If ${\rm rank}_{\R}(A)<m$, then left hand and right side of (\ref{QR1}) are both zero. Hence we may assume ${\rm rank}_{\R}(A)=m$.
By Gram-Schmidt orthogonalization, there is a nonsingular lower triangular matrix $L\in M_{m\times m}$
with diagonal entries $d(j)^{-1}$, $j=1,\ldots,m$, such that the rows of $Q=LA$
are an orthonormal basis of the subspace of $\R^n$ spanned by $r_1,\ldots,r_m$.
Thus $QQ^T=I_m$ where $I_m$ denotes the $m\times m$ identity matrix.
We conclude $AA^T = L^{-1}QQ^T(L^{-1})^T=L^{-1}(L^{-1})^T$. Hence
$$\det(AA^T)=(\det(L^{-1}))^2 = \prod_{j=1}^{m}d(j)^2.$$
\end{proof}

\begin{re} \label{volume}
Let $A\in M_{u,n}(\R)$, $B\in M_{w,n}(\R)$, and $C={A \choose B}$. Then
$$\det(CC^T) \le \det(AA^T)\det(BB^T).$$
\end{re}
\begin{proof} Let $r_1,\ldots,r_u$ be the rows of $A$ and let
$r_{u+1},\ldots,r_{u+w}$ be the rows of $B$.
For $1\le i <j \le u+w$, let $d(i,j)$ denote the distance of $r_j$
from the subspace of $\R^n$ generated by $r_i,\ldots,r_{j-1}$.
Furthermore, set $d(i,i)=||r_i||$ for all $i$.
By Result \ref{QR}, we have
\begin{align*}
\det(AA^T) & =\prod_{j=1}^{u}d(1,j)^2,\\
\det(BB^T) &=\prod_{j=u+1}^{u+w}d(u+1,j)^2,\\
\det(CC^T)&=\prod_{j=1}^{u+w}d(1,j)^2.
\end{align*}
Moreover, $d(1,j)\le d(u+1,j)$ for  $j\ge u+1$ by the definition of the $d(i,j)$'s. Hence
\begin{align*}
\det(CC^T) & =  \prod_{j=1}^{u+w}d(1,j)^2\\
& \le  \prod_{j=1}^{u}d(1,j)^2\prod_{j=u+1}^{u+w}d(u+1,j)^2\\
& =  \det(AA^T)\det(BB^T).
\end{align*}
\end{proof}

A repeated application of Result \ref{volume} gives the following.
\begin{cor} \label{volumecor}
Let $A_i\in M_{u_i,n}(\R)$, $i=1,\ldots,k$, and
$$C=\begin{pmatrix} A_1\\ A_2\\ \vdots\\A_k\end{pmatrix}.$$
 Then
$$\det(CC^T) \le \prod_{i=1}^k\det(A_iA_i^T).$$
\end{cor}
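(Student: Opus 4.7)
The plan is a direct induction on $k$, using Result \ref{volume} as the engine that shrinks the block count by one at each step.

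The base case $k=1$ is trivial (the inequality becomes an equality), and the case $k=2$ is precisely the statement of Result \ref{volume}. For the inductive step, assume the corollary holds for $k-1$ blocks. Given $A_1,\ldots,A_k$, group the first $k-1$ matrices into
$$B = \begin{pmatrix} A_1 \\ A_2 \\ \vdots \\ A_{k-1} \end{pmatrix} \in M_{u_1+\cdots+u_{k-1},\, n}(\R),$$
so that $C = \binom{B}{A_k}$. First I would apply Result \ref{volume} to this two-block decomposition to obtain
$$\det(CC^T) \le \det(BB^T)\,\det(A_k A_k^T).$$
Then I would apply the inductive hypothesis to $B$, which is built from the $k-1$ blocks $A_1,\ldots,A_{k-1}$, yielding $\det(BB^T) \le \prod_{i=1}^{k-1}\det(A_i A_i^T)$. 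Multiplying these two inequalities completes the induction.

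There is no real obstacle here: Result \ref{volume} is exactly a $k=2$ statement that "splits off" the last block, and the bound it produces is multiplicative, which is precisely the form needed to feed into an inductive argument. The only mild point to check is that $B$ is a genuine matrix in $M_{w,n}(\R)$ with $w = u_1+\cdots+u_{k-1}$, so that Result \ref{volume} applies as stated; this is immediate from the block construction.
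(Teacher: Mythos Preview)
Your proposal is correct and is exactly what the paper has in mind: it states that the corollary follows by ``a repeated application of Result \ref{volume}'' without writing out the details, and your induction (splitting off one block at a time with Result \ref{volume}) is precisely that repeated application.
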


%%%%%%%%%%%%%%%%%%%%%%%%%%%%%%%%%%%%%%%%%%%%%%%%%%%%%%%%%%%%%%%%%%%%%%%%%%%%%%%%%%%%%%

\section{Set-up} \label{setup}
In this section, we introduce some notation and assumptions which will
be implicitly assumed in the rest of the paper.
Let $p$ be an odd prime and suppose that $A$ is a
symmetric subset of $\F_p$ which has no unique difference.

\medskip

Recall that we assume $p\ge 5$.
Write $A=\left\{\pm a_1,...,\pm a_n\right\}$ such that $a_j\neq \pm a_i$ for $i\neq j$.
We set $a_n=0$ if $0\in A$. Let $m=\lfloor |A|/2 \rfloor$.
Note that $m=n$ if $0 \not\in A$ and $m=n-1$ if $0 \in A$.

\medskip

Next, we show that we may assume $|A|\ge 4$.
If $|A|=1$, then $A$ certainly has a unique difference.
If $2\le |A|\le 3$, then $A=\{\pm a_1\}$ or $A=\{0,\pm a_1\}$ for some $a_1\neq 0$.
Then $a_1-(-a_1)=2a_1$ is a unique difference in $A$.
Hence we can indeed assume $|A|\ge 4$.

\medskip

We now set up a linear system arising from $A$
and derive some useful properties of its coefficient matrix.
Let $i$ be arbitrary with $1\le i \le m$.
As $2a_i=a_i-(-a_i)$ is not a unique difference in $A$ and $2a_i\neq \pm a_i-a_i$, there exists an ordered pair
$(\sigma(i), \tau(i))\neq (i,i)$ with
\begin{equation}\label{differences}
2a_i \pm a_{\sigma(i)} \pm a_{\tau(i)}=0.
\end{equation}
Here ``$\pm a_{\sigma(i)} \pm a_{\tau(i)}$'' means that \textit{any} combination
of signs is possible including $a_{\sigma(i)} - a_{\tau(i)}$ and
$-a_{\sigma(i)} + a_{\tau(i)}$. We use this convention throughout the rest
of the paper.

\medskip

We consider the homogeneous linear system corresponding to these equations:
\begin{equation}\label{system}
2x_i \pm x_{\sigma(i)} \pm x_{\tau(i)}=0,\ i=1,\ldots,m.
\end{equation}
Here we use the convention $x_n=0$ if $0\in A$ (and thus $a_n=0$).

\medskip

Note that the coefficient vectors corresponding to the system
(\ref{system}) all have at most $3$ nonzero entries.
The nonzero coefficients, however, are not necessarily $2,\pm 1$,
since $i,\sigma(i),\tau(i)$ are not necessarily distinct.
We now determine exactly which coefficient vectors can occur.

\medskip

\noindent
{\bf Case 1} $\sigma(i)=i$ or $\tau(i)=i$.  By symmetry, we can assume $\tau(i)=i$.
Thus $2a_i \pm a_{\sigma(i)} \pm a_{i}=0$. As $(\sigma(i), \tau(i))\neq (i,i)$,
we have $\sigma(i)\neq i$. If  $2a_i \pm a_{\sigma(i)} - a_{i}=0$,
then $a_i\pm a_{\sigma(i)}=0$, contradicting the assumption $a_i\neq \pm a_j$ for $i\neq j$.
Hence $2a_i \pm a_{\sigma(i)} + a_{i}=3a_i\pm a_{\sigma(i)}=0$.
Recall that we assume $p\ge 5$.
If $a_{\sigma(i)}=0$, then $3a_i=0$ and thus $a_i=0$, contradicting the assumptions.
Thus $a_{\sigma(i)}\neq 0$, i.e., $\sigma(i)\le m$.

\medskip

Hence (\ref{differences}) can be written as
\begin{equation}\label{type1}
3x_i\pm x_{\sigma(i)}=0
\end{equation}
with $\sigma(i)\neq i$ and $\sigma(i)\le m$.
We call (\ref{type1}) an equation of \textbf{type 1}.

\medskip

\noindent
{\bf Case 2} $\sigma(i)\neq i$ and $\tau(i)\neq i$.
If $\tau(i)=\sigma(i)$, then (\ref{differences})
implies $a_i=0$ or $a_i=\pm a_{\sigma(i)}$, contradicting the assumptions.
Thus $\tau(i)\neq \sigma(i)$. Hence $i, \sigma(i), \tau(i)$ are pairwise
distinct.

\medskip

Recall that $x_n=0$ if $0\in A$.  Hence,  if $0\in A$
and $n\in \{\sigma(i),\tau(i)\}$, then
one of the variables $x_{\sigma(i)}, x_{\tau(i)}$
occurs with coefficient zero in (\ref{differences}).
By symmetry, we can assume that $x_{\tau(i)}$
occurs with coefficient zero in this case.

\medskip

Hence, in Case 2, we can write (\ref{differences}) as
\begin{equation}\label{type2}
\begin{split}
2x_i \pm x_{\sigma(i)} =0 & \text{\ \ if $0\in A$ and $\tau(i)=n$},\\
2x_i \pm x_{\sigma(i)} \pm x_{\tau(i)}=0 & \text{\ \ otherwise,}
\end{split}
\end{equation}
where $i, \sigma(i), \tau(i)$ are pairwise distinct.
In both cases, we call (\ref{type2}) an equation of \textbf{type 2}.

\medskip

In summary, as Case 1 and Case 2 cover all possible cases,
for every $i\in \{1,\ldots,m\}$, one of the following equations
is contained in the linear system (\ref{system}).
\begin{equation*}
\begin{split}
3x_i\pm x_{\sigma(i)}=0 & \text{\ \ (type 1)},\\
2x_i \pm x_{\sigma(i)} =0 & \text{\ \ (type 2)},\\
2x_i \pm x_{\sigma(i)} \pm x_{\tau(i)}=0 & \text{\ \ (type 2).}
\end{split}
\end{equation*}
Furthermore, the following hold.
\begin{itemize}
\item $\sigma(i)\le m$ and $\tau(i)\le m$,
\item  $i,\sigma(i),\tau(i)$ are pairwise distinct.
\end{itemize}
Of course, the statements involving $\tau(i)$ only apply if the equation $2x_i \pm x_{\sigma(i)} \pm x_{\tau(i)}=0$
is contained (\ref{system}).

\medskip

We use a similar terminology for the coefficient vectors
of the system (\ref{system}):  We say a coefficient vector  is of \textbf{type 1} if
it has exactly one entry $3$, exactly one entry $\pm 1$, and all its
remaining entries are zero.
A coefficient vector  is of \textbf{type 2} if it has
exactly one entry $2$, at most two entries $\pm 1$,
and all its remaining entries are zero.

%%%%%%%%%%%%%%%%%%%%%%%%%%%%%%%%%%%%%%%%%%%%%%%%%%%%%%%%%%%%%%%%%%%%%%%%%%%%%%%%%%

\section{A Congruence for Minors of the Coefficient Matrix}
Let $M$ be the coefficient matrix of the linear system (\ref{system}).
Recall that all entries of $M$ are from $\{0,\pm 1,2,3\}$.
Note that $M$ can be considered as a matrix with rational entries as well as a matrix
with entries from $\F_p$.
In the following, we make use of both interpretations.
Let $r$ be the rank of the coefficient matrix $M$ over $\Q$.
We now prove that all $r\times r$-minors of $M$ are divisible by $p$.
This result is useful, as it can be combined with estimates for minors of $M$ which we get from Result \ref{QR}.
 \begin{thm}\label{snf}
Suppose that $A=\{\pm a_1,...,\pm a_n \}\subset \F_p$ does not have a unique difference
and let $M$ be the coefficient matrix of the corresponding linear system $(\ref{system})$.
Write $r={\rm rank}_{\Q}(M)$. Every $r\times r$-minor of $M$ is divisible by $p$.
\end{thm}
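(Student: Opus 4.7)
The plan is to translate the divisibility statement into a rank assertion: every $r \times r$ minor of $M$ is divisible by $p$ if and only if $\mathrm{rank}_{\F_p}(M) < r$. The inequality $\mathrm{rank}_{\F_p}(M) \le \mathrm{rank}_{\Q}(M) = r$ is automatic, since every $(r{+}1) \times (r{+}1)$ minor already vanishes over $\Z$, while a non-zero $r \times r$ minor modulo $p$ would force equality. So it suffices to derive a contradiction from the assumption $\mathrm{rank}_{\F_p}(M) = r$.

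The key input is that the vector $a = (a_1, \ldots, a_m) \in \F_p^m$ lies in $\ker_{\F_p}(M)$, since the defining equations (\ref{differences}) say exactly $Ma = 0$ in $\F_p$. I would invoke the Smith normal form $M = UDV$ over $\Z$, with $U, V$ unimodular and $D = \mathrm{diag}(d_1, \ldots, d_r, 0, \ldots, 0)$: the hypothesis $\mathrm{rank}_{\F_p}(M) = r$ forces $p \nmid d_r$, and hence $\ker_{\F_p}(M)$ coincides with the mod-$p$ reduction of $\ker_{\Z}(M)$. Consequently there is an integer vector $\tilde{a} \in \Z^m$ with $M \tilde{a} = 0$ and $\tilde{a} \equiv a \pmod{p}$, and since each $a_i$ is non-zero in $\F_p$ for $i \le m$, $\tilde{a}$ is non-zero as an integer vector.

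The final ingredient is a maximum-modulus argument on $\tilde{a}$. Let $\mu = \max_i |\tilde{a}_i| > 0$ and pick an index $i_0$ attaining this maximum; then inspect the $i_0$-th row of $M$ by type. A type~1 row $3\tilde{a}_{i_0} \pm \tilde{a}_{\sigma(i_0)} = 0$ yields $|\tilde{a}_{\sigma(i_0)}| = 3\mu > \mu$, a contradiction; a short type~2 row $2\tilde{a}_{i_0} \pm \tilde{a}_{\sigma(i_0)} = 0$ yields $|\tilde{a}_{\sigma(i_0)}| = 2\mu$, again a contradiction. A long type~2 row $2\tilde{a}_{i_0} \pm \tilde{a}_{\sigma(i_0)} \pm \tilde{a}_{\tau(i_0)} = 0$ forces, via $2\mu = |2\tilde{a}_{i_0}| \le |\tilde{a}_{\sigma(i_0)}| + |\tilde{a}_{\tau(i_0)}| \le 2\mu$, both $|\tilde{a}_{\sigma(i_0)}| = |\tilde{a}_{\tau(i_0)}| = \mu$ and, by equality in the triangle inequality, $\tilde{a}_{\sigma(i_0)} = \pm \tilde{a}_{i_0}$. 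Reducing mod $p$ then gives $a_{\sigma(i_0)} = \pm a_{i_0}$ in $\F_p$, contradicting $a_j \neq \pm a_i$ for $j \neq i$ (as $\sigma(i_0) \neq i_0$).

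I expect the lifting step --- that $\mathrm{rank}_{\F_p}(M) = r$ makes every mod-$p$ kernel vector of $M$ liftable to an integer kernel vector --- to be the main subtlety; it is the point where one has to invoke some structure theory over a PID. Once the lift $\tilde{a}$ is in hand, the maximum-modulus case analysis is routine, and it is precisely the distinctness property $a_i \neq \pm a_j$ that rules out the otherwise-permissible long type~2 case.
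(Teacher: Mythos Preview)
Your proof is correct and follows essentially the same route as the paper: both use the Smith normal form of $M$ over $\Z$ to lift the $\F_p$-kernel vector $(a_1,\ldots,a_m)$ to an integer vector $\tilde a$ with $M\tilde a=0$, and then reach a contradiction via a maximum-modulus argument on $\tilde a$. The only cosmetic difference is the endgame: the paper first notes that $\tilde a_i\neq\pm\tilde a_j$ for $i\neq j$ (since this holds mod $p$), obtains a strict maximum, and observes that $2\tilde a_k$ is then a unique difference of $\{\pm\tilde a_1,\ldots,\pm\tilde a_m\}$ contradicting $M\tilde a=0$, whereas you analyse the $i_0$-th row by type and handle the equality case of the triangle inequality directly before reducing mod $p$.
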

\begin{proof}
Recall that $M$ is an $m\times m$-matrix and that $M$ has entries from $\{0,\pm 1,2,3\}$
only. In this proof, we consider the entries of $M$ as \textit{integers}
(not as elements of $\F_p$), and we will work with the Smith Normal Form of $M$
over the integers.

\medskip

Let $D=\text{diag}(d_1,...,d_r,0,...,0)$ be the Smith Normal Form of $M$.
Note that $d_1,\ldots,d_r$ are integers such that $d_i$ divides $d_{i+1}$ for $i=1,\ldots,r-1$.
Furthermore, there are $B,C\in M_{m,m}(\Z)$ with $\det(B)=\pm 1$, $\det(C)=\pm 1$,
and $M=BDC$. It is important for this proof to note that
$B$, $C$, $D$ are matrices with \textit{integer} entries and the equation $M=BDC$ holds
over $\Z$ (not only over $\F_p$).
Furthermore, note that all entries of $C^{-1}$ are integers, as $\det(C)=\pm 1$.

\medskip

Write $x=(a_1,\ldots,a_m)^T$. Note $x\in \F_p^m$.
We have $x\neq 0$, as $|A|>1$.
Recall that $Mx=0$ over $\F_p$ by (\ref{system}).

\medskip

Suppose that $p$ does not divide $d_r$. Then $p$ does not divide any of the integers $d_1,\ldots,d_r$.
Write $Cx=(b_1,\ldots,b_m)^T$
where $b_1,\ldots,b_m\in \F_p$.
Note that $DCx=0$ over $\F_p$ if and only if $b_1=\cdots= b_r=0$, since
we assume that $d_1,\ldots,d_r$ are not divisible by $p$.
Hence $DCx=0$ if and only if $x=C^{-1}(0,\ldots,0,b_{r+1},\ldots,b_{m})^T$ for some $b_i\in \F_p$.
As $BDCx= Mx=0$ over $\F_p$, we indeed have $DCx=0$ over $\F_p$.
Thus
\begin{equation} \label{snf1}
x=C^{-1}b \text{ with } b=(0,\ldots,0,b_{r+1},\ldots,b_{m})^T\in \F_p^m.
\end{equation}
For convenience, we set $b_i=0$ for $i=1,\ldots,r$.

\medskip

We now switch from equations over $\F_p$ to equations over $\Z$.
We first define a vector $c$ with \textit{integer entries}
which represent the residue classes $b_1,\ldots,b_m$ mod $p$.
Formally, let $c_1,\ldots,c_m$ be the unique integers
with $0\le c_i \le p-1$ and
\begin{equation}  \label{snf1a}
b_i = c_i+p\Z/\Z
\end{equation}
 (here we use the standard notation $\F_p=\{k+p\Z/\Z: k=0,\ldots,p-1\}$).
Note that $c_1=\cdots=c_r=0$, as $b_1=\cdots = b_r=0$
(again, note that $c_1=\cdots=c_r=0$ are equations over $\Z$,
not only over $\F_p$).

\medskip

Define $y=(y_1,\ldots,y_m)\in \Z^m$ by
\begin{equation} \label{snf2}
y=C^{-1}(0,\ldots,0,c_{r+1},\ldots,c_{m})^T
\end{equation}
(recall that the $c_i$'s are considered as integers, not as elements of $\F_p$).
Then we have
\begin{equation} \label{snf3}
My=BDCy=B\,\text{diag}(d_1,...,d_r,0,...,0)(0,\ldots,0,c_{r+1},\ldots,c_{m})^T=0
\end{equation}
over $\Z$ (note that all entries of matrices and vectors
occurring in (\ref{snf3}) are
considered as integers and to derive (\ref{snf3}),
we need the fact that the equation $M=BDC$ holds over $\Z$,
not only over $\F_p$).

\medskip

Let $\Gamma_j$ be the $j$th row of $C^{-1}$, $j=1,\ldots,m$.
By (\ref{snf1}) and (\ref{snf2}), we have
\begin{equation} \label{snf4}
a_j = \Gamma_jb \text{ and } y_j = \Gamma_jc
\end{equation}
where $b=(0,\ldots,0,b_{r+1},\ldots,b_{m})^T$
and $c=(0,\ldots,0,c_{r+1},\ldots,c_{m})^T$.
Note that (\ref{snf1a}) and (\ref{snf4}) imply
\begin{equation}  \label{snf5}
a_j = y_j+p\Z/\Z,\ i=1,\ldots,m.
\end{equation}

\medskip

Recall that $M$ is the coefficient matrix of the linear system (\ref{system})
and that $My=0$ over $\Z$ by (\ref{snf3}).
This implies that $\{\pm y_1,\ldots,\pm y_m\}$ does not have a unique difference (where the
differences are taken in $\Z$). Recall that $a_i\neq \pm a_j$ for $i\neq j$ by assumption.
Thus (\ref{snf5}) implies  $y_i\neq \pm y_j$ for $i\neq j$. Hence there is $k\in\{1,\ldots,m\}$
such that $|y_k|>|y_i|$ for all $i\neq k$.
This implies that $2y_k = y_k-(-y_k)$ is a unique difference of $\{\pm y_1,\ldots,\pm y_m\}$, a contradiction.
We conclude that $p$ divides $d_r$.

\medskip

From the theory of the Smith Normal Form  (see \cite[p.\ 41]{nor}, for instance), it is well known that
$d_r$ divides the greatest common divisor of all $r\times r$-minors of $M$.
 \end{proof}

%%%%%%%%%%%%%%%%%%%%%%%%%%%%%%%%%%%%%%%%%%%%%%%%%%%%%%%%%%%%%%%%%%%%%%%%%%%%%%%%%%

\section{Equations of Type 1}
Equations of type 1 play a critical role in the proof of Theorem \ref{main},
as the Euclidean norm of their coefficient vectors is the largest among
the equations occurring in the linear system (\ref{system}). In this section,
we study the structure of the set of equations of type 1 contained
in (\ref{system}).
Recall that equations of type 1 have the form
$$
3x_i\pm x_{\sigma(i)}=0
$$
where $1\le i\le m$, $\sigma(i)\neq i$, and $\sigma(i)\le m$.

\begin{lem} \label{graph}
Suppose $3^{\lfloor |A|/2 \rfloor}\le p$.
Let $I$ be a subset of $\{1,\ldots,m\}$ such
that $3x_i\pm x_{\sigma(i)}=0$, $i\in I$,  are equations of type $1$
contained in $(\ref{system})$.
Let $G$ be the directed graph with vertex set $I\cup \{\sigma(i):i\in I\}$ and  edge set
$E=\{(i,\sigma(i)): i\in I\}$. Then $E$ can  be decomposed into
directed paths which are pairwise vertex disjoint.
\end{lem}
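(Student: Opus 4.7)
My plan is to reduce the statement to two structural facts about the directed graph $G$: (i) the map $i \mapsto \sigma(i)$ is injective on $I$, so that every vertex of $G$ has in-degree at most $1$; and (ii) $G$ contains no directed cycle. Every $i \in I$ has out-degree exactly $1$ in $G$ (the edge $(i,\sigma(i))$) and every vertex in $V(G)\setminus I$ has out-degree $0$. With (i) and (ii), the bounds on in- and out-degrees force each weakly connected component of $G$ to be a simple directed path, and these components automatically yield a decomposition of $E$ into pairwise vertex-disjoint directed paths.

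For (i), I would argue by contradiction. If $\sigma(i) = \sigma(j)$ for distinct $i,j \in I$, the two type-$1$ equations $3a_i \pm a_{\sigma(i)} = 0$ and $3a_j \pm a_{\sigma(j)} = 0$ give $3a_i \equiv \pm 3 a_j \pmod{p}$. Since $p\ge 5$, the element $3$ is invertible in $\F_p$, so $a_i \equiv \pm a_j \pmod{p}$, contradicting the standing convention $a_i \neq \pm a_j$ for $i \neq j$.

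For (ii), suppose $i_1 \to i_2 \to \cdots \to i_k \to i_1$ is a directed cycle in $G$. Because $\sigma(i)\neq i$ we have $k \ge 2$, and because the cycle's vertex set lies in $\{1,\ldots,m\}$ we have $k \le m$. Each edge $(i_j,i_{j+1})$ corresponds to $a_{i_{j+1}} = \pm 3\, a_{i_j}$ in $\F_p$; traversing the cycle yields $a_{i_1} = \pm 3^k a_{i_1}$, and since $a_{i_1}\neq 0$ we obtain $3^k \equiv \pm 1 \pmod{p}$. The hypothesis $3^m \le p$ gives $3^k \le p$. If $3^k \equiv 1 \pmod{p}$, then $p \le 3^k - 1 < 3^k \le p$, impossible. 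If $3^k \equiv -1 \pmod{p}$, then $p \mid 3^k + 1$ and $3^k + 1 \le p+1 < 2p$, which forces $3^k + 1 = p$; but $3^k$ is odd, so $3^k + 1$ is even, contradicting $p\ge 5$ being an odd prime.

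The main obstacle is (ii), not the injectivity (i): ruling out cycles genuinely uses both the growth hypothesis $3^m \le p$ and the parity observation that $3^k + 1$ is even. Without the parity argument, the case $3^k \equiv -1 \pmod{p}$ would not close, and the hypothesis $p \ge 5$ is essential both for $3$ to be invertible modulo $p$ in step (i) and to exclude $p = 2$ in the final step of (ii).
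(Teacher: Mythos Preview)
Your proposal is correct and follows essentially the same route as the paper: the paper also reduces to showing out-degree $\le 1$, in-degree $\le 1$ (via the same $3a_j=\pm 3a_k$ argument), and no directed cycle, and then invokes the standard fact that such a graph is a disjoint union of directed paths. The only cosmetic difference is in the cycle step: the paper bounds $3^{k-1}+1\le 3^m+1<p$ in one stroke by first observing that $p\ge 3^m$ together with $p>3$ prime forces $p\ge 3^m+2$, whereas you split into the cases $3^k\equiv 1$ and $3^k\equiv -1$ and use the parity of $3^k+1$ directly; both arguments exploit the same parity obstruction and the same growth hypothesis.
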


\begin{proof}
We show\\
\mbox{\ \ \ } (i) every vertex of $G$ has outdegree at most $1$,\\
\mbox{\ \ \ }(ii) every vertex of $G$ has indegree at most $1$,\\
\mbox{\ \ \ }(iii) $G$ does not contain a directed cycle.

\medskip

Note that (i-iii) imply that $G$ indeed can be decomposed into
directed paths which are pairwise vertex disjoint.

\medskip

First of all, there is at most one edge $(i,\sigma(i))$ for every vertex $i$. This means that the outdegree
of every vertex in $G$ is at most one.

\medskip

Suppose that the indegree of a vertex $i$ is at least $2$. Then there are distinct vertices $j,k$
with $\sigma(j)=\sigma(k)=i$. By definition, this implies
$3a_j\pm a_i=0$ and $3a_k\pm a_i=0$ and thus $3(a_j-a_k)=0$ or $3(a_j+a_k)=0$.
As $p>3$, we conclude $a_j=\pm a_k$ which contradicts the assumption $a_j\neq \pm a_k$ for $j\neq k$.
This shows that all vertices of $G$ have indegree at most $1$.

\medskip

Now suppose that $G$ contains a directed cycle. Then there are vertices $v_1,v_2,\ldots,v_k$ with $v_k=v_1$
such that $(v_i,v_{i+1})\in E$ for $i=1,\ldots,k-1$, i.e., $v_{i+1}=\sigma(v_i)$ for $i=1,\ldots,k-1$.
By definition, this implies $3a_{v_i}= \pm a_{v_{i+1}}$ for $i=1,\ldots,k-1$.
Hence
$$a_{v_1}=a_{v_k}=\pm 3a_{v_{k-1}}=\pm 9a_{v_{k-2}}=\cdots =\pm 3^{k-1}a_{v_1}.$$
Thus $(\pm 3^{k-1}-1)a_{v_1}=0$.
Note that $k\le m+1$, as the cycle contains every vertex $\neq v_1$ at most once.
Moreover, $p\ge 3^m= 3^{\lfloor |A|/2 \rfloor}$ by assumption and thus $p\ge 3^m+2$, as $p>3$ is a prime.
We conclude $3^{k-1}+1\le 3^m+1<p$.
Hence $(\pm 3^{k-1}-1)a_{v_1}=0$ implies $a_{v_1}=0$, contradicting the assumption
$a_i\neq 0$ for $i=1,\ldots,m$. This shows that $G$ does not contain a directed cycle.

\medskip

In summary, we have shown that (i)-(iii) hold, and this completes the proof.
\end{proof}

\begin{remark}\rm
Note that Lemma \ref{graph} implies that $G$ does not contain \textit{any} cycle, directed or undirected.
\end{remark}

Next, we compute determinants arising from matrices whose rows are coefficient vectors
of equations of type 1.

\begin{lem} \label{type1det}
Let $G$ be  the graph defined in Lemma $\ref{graph}$.
Suppose $J$ is a subset of $\{1,\ldots,m\}$ such
that  $\{(i,\sigma(i)): i\in J\}$ is a directed path in $G$
and let $B$ be the coefficient matrix of the corresponding equations $3x_i\pm x_{\sigma(i)}=0$, $i\in J$.
Then
\begin{equation} \label{type1det1}
\det(BB^T) = \frac{1}{8}\left(-1 + 9^{|J|+1}\right).
\end{equation}
\end{lem}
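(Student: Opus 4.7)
The plan is to write $B$ explicitly using the path structure, recognize $BB^T$ as a tridiagonal matrix with constant diagonal entries and off-diagonals of constant absolute value, and then evaluate its determinant by a standard linear recurrence.

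First I would label the directed path as $i_1 \to i_2 \to \cdots \to i_{k+1}$, where $k = |J|$ and $i_{j+1} = \sigma(i_j)$; Lemma~\ref{graph} (which rules out both cycles and higher indegrees/outdegrees) guarantees that these vertices are pairwise distinct. After restricting to the columns indexed by $i_1,\dots,i_{k+1}$, the $j$th row of $B$ has a $3$ in column $i_j$, a $\pm 1$ in column $i_{j+1}$, and zeros elsewhere. Computing $BB^T$ row-by-row then shows that $(BB^T)_{jj} = 9+1 = 10$, that $(BB^T)_{j,j+1} = \pm 3$ (coming from the single shared column $i_{j+1}$, where row $j$ has a $\pm 1$ and row $j+1$ has a $3$), and that all other entries vanish, since non-consecutive rows have disjoint support among $i_1,\dots,i_{k+1}$.

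Next, setting $D_k = \det(BB^T)$, I would expand by cofactors along the last row. The diagonal entry $10$ contributes $10\, D_{k-1}$, while the single off-diagonal entry $\pm 3$ in the last row yields, after one further expansion along the last column of the remaining minor, $-(\pm 3)^2 D_{k-2} = -9\, D_{k-2}$, so the sign choices drop out. This gives the recurrence
$$D_k = 10\, D_{k-1} - 9\, D_{k-2}, \qquad D_0 = 1,\ D_1 = 10.$$
Solving via the characteristic polynomial $\lambda^2 - 10\lambda + 9 = (\lambda-1)(\lambda-9)$, one writes $D_k = \alpha + \beta \cdot 9^k$, and the initial conditions force $\alpha = -1/8$, $\beta = 9/8$, yielding $D_k = (9^{k+1}-1)/8$ as claimed.

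There is no substantial obstacle here: the argument is a direct computation. The only point worth emphasizing is that the arbitrary signs in the defining equations of type $1$ never change which entries of $BB^T$ are nonzero and enter the determinant expansion only through their squares, so that $\det(BB^T)$ depends only on $|J|$, consistent with the statement.
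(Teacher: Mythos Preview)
Your proof is correct and follows essentially the same approach as the paper: both write $B$ explicitly from the path structure, identify $BB^T$ as a tridiagonal matrix with diagonal entries $10$ and off-diagonal entries $\pm 3$, and then compute the determinant via the recurrence coming from Laplace expansion along the last row/column. The only cosmetic difference is that the paper verifies the closed form by induction, whereas you solve the recurrence via its characteristic polynomial; the underlying computation is identical.
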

\begin{proof}
Write $v=|J|$. By relabeling vertices of $G$, if necessary, we may assume that
the directed path consists of the edges  $(i,i+1)$, $i=1,\ldots,v$.
The corresponding equations are $3x_i\pm x_{i+1}=0$, $i=1,\ldots,v$. Hence
$B$ is a $v\times m$-matrix of the form
$$
\begin{pmatrix}
3 & \pm 1&0& \cdots &0&0   &0&\cdots&0\\
0& 3 & \pm 1& \cdots &0&0  &0&\cdots&0\\
\vdots&\vdots&\vdots& \ddots& \vdots&\vdots  &0&\cdots&0\\
0&0&0&  \cdots &3&\pm 1 &0&\cdots&0\\
\end{pmatrix}.
$$
Thus
$$
BB^T=
\begin{pmatrix}
10 & 3\delta_1 &0& \cdots &0&0&0\\
3\delta_1& 10 & 3\delta_2& \cdots &0&0&0\\
0 & 3\delta_2& 10 & 3\delta_3& \cdots &0&0\\
\vdots&\vdots&\ddots& \ddots& \ddots&\vdots\\
0& \cdots & 0& 3\delta_{v-3}&10&3\delta_{v-2}&0\\
0& \cdots & 0& 0&3\delta_{v-2}&10&3\delta_{v-1}\\
0& \cdots & 0 & 0 &0&3\delta_{v-1}&10
\end{pmatrix}
$$
with $\delta_i=\pm 1$, $i=1,\ldots,v-1$. We now prove (\ref{type1det1})
by induction on $v$. It is straightforward to check that  (\ref{type1det1}) holds for $v=1$.
Suppose $v\ge 2$. Using Laplace expansion with respect to the last column
of $BB^T$ and the inductive hypothesis, we find
$$\det(BB^T)=10\left(\frac{1}{8}\left(-1 + 9^{v}\right)\right)
           -(3\delta_{v-1})^2\left(\frac{1}{8}\left(-1 + 9^{v-1}\right)\right).$$
Note $(3\delta_{v-1})^2=9$.
Hence
$$\det(BB^T)
=\frac{1}{8}\left(-10+9 +(10-1)9^{v}\right)
= \frac{1}{8}\left(-1+9^{v+1}\right).$$
\end{proof}

\begin{cor} \label{type1bound}
Suppose $3^{\lfloor |A|/2 \rfloor}\le p$.
 Let $I$ be a subset of  $\{1,\ldots,m\}$ such
that $3x_i\pm x_{\sigma(i)}=0$, $i\in I$,  are  equations of type $1$.
Let $G$ be the directed graph defined in Lemma $\ref{type1}$
and let $P_1,\ldots,P_t$ be vertex disjoint directed paths in $G$ with $E=\displaystyle\bigcup_{j=1}^t P_j$.
Let $B$ be the matrix whose rows are the coefficient vectors
of the equations $3x_i\pm x_{\sigma(i)}=0$, $i\in I$.
Then
$$\det(BB^T) < 11^{t}9^{|I|-t}.$$
\end{cor}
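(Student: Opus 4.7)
The plan is to split $B$ into row blocks corresponding to the paths $P_1,\ldots,P_t$ and combine the product inequality of Corollary \ref{volumecor} with the exact formula of Lemma \ref{type1det} for each path.

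First I would set things up carefully. For each $j\in\{1,\ldots,t\}$, let $J_j\subseteq I$ be the set of sources of the edges of $P_j$, so that $P_j=\{(i,\sigma(i)):i\in J_j\}$; write $v_j=|J_j|$ for the number of edges of $P_j$. Since the paths $P_1,\ldots,P_t$ partition $E$, the sets $J_1,\ldots,J_t$ partition $I$, and in particular $\sum_{j=1}^t v_j=|I|$. Let $B_j$ be the submatrix of $B$ consisting of the rows indexed by $J_j$. Then, up to reordering rows (which does not affect $\det(BB^T)$), the matrix $B$ is the vertical stack of $B_1,\ldots,B_t$.

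Next I would apply Corollary \ref{volumecor} to this stacking to get
\[
\det(BB^T)\;\le\;\prod_{j=1}^t \det(B_jB_j^T).
\]
Because each $P_j$ is a directed path in $G$, Lemma \ref{type1det} applies to $B_j$ and yields
\[
\det(B_jB_j^T)=\frac{1}{8}\bigl(9^{v_j+1}-1\bigr).
\]
The final step is the elementary estimate
\[
\frac{1}{8}\bigl(9^{v_j+1}-1\bigr)\;<\;\frac{9^{v_j+1}}{8}\;=\;\frac{81}{8}\cdot 9^{v_j-1}\;<\;11\cdot 9^{v_j-1},
\]
valid for every $v_j\ge 1$. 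Multiplying over $j=1,\ldots,t$ and using $\sum_j v_j=|I|$ gives
\[
\det(BB^T)\;<\;\prod_{j=1}^t 11\cdot 9^{v_j-1}\;=\;11^t\cdot 9^{|I|-t},
\]
as required.

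I do not expect a serious obstacle here: the proof is essentially a bookkeeping exercise combining the two ingredients already prepared in the preceding two results. The only mild subtlety is to notice that the hypothesis $3^{\lfloor |A|/2\rfloor}\le p$ is needed only to invoke Lemma \ref{graph} (so that the decomposition into vertex-disjoint directed paths exists); once the paths $P_j$ are in hand, that hypothesis plays no further role in the estimate. The choice of the constant $11$ is not tight, but it is exactly what is needed to absorb the slack $81/8<11$ so that the product collapses cleanly into $11^t\cdot 9^{|I|-t}$.
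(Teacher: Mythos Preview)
Your proof is correct and follows essentially the same approach as the paper: split $B$ into blocks $B_j$ corresponding to the paths, apply Corollary \ref{volumecor}, plug in the exact formula from Lemma \ref{type1det}, and use the elementary bound $\tfrac{1}{8}(9^{v+1}-1)<11\cdot 9^{v-1}$ before multiplying over $j$. Your presentation is in fact slightly more careful than the paper's (you make the partition $J_1,\ldots,J_t$ of $I$ explicit and note that row reordering leaves $\det(BB^T)$ unchanged).
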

\begin{proof}
For  $j=1,\ldots,t$,
let $B_j$ be the matrix whose rows are the coefficient vectors of the equations
$3x_i\pm x_{\sigma(i)}=0$, $(i,\sigma(i))\in P_j$. By Lemma \ref{type1det},
we have
$$\det(B_jB_j^T)= \frac{1}{8}\left(-1 + 9^{|P_j|+1}\right).$$
Note that
$$\frac{1}{8}\left(-1 + 9^{x+1}\right)<11\cdot 9^{x-1}$$
for all $x\ge 1$.  Using Corollary \ref{volumecor} and $\sum_{j=1}^t|P_j|=|I|$, we conclude
\begin{align*}
\det(BB^T)  & \le  \prod_{j=1}^t \det(B_jB_j^T)\\
 & =    \prod_{j=1}^t\left( \frac{1}{8}\left(-1 + 9^{|P_j|+1}\right)\right)\\
& <  \prod_{i=1}^t \left(11\cdot 9^{|P_j|-1}\right)\\
& =  11^t9^{|I|-t}.
\end{align*}
\end{proof}

%%%%%%%%%%%%%%%%%%%%%%%%%%%%%%%%%%%%%%%%%%%%%%%%%%%%%%%%%%%%%%%%%%%%%%%%%%%%%%%%%%

\section{Proof of Theorem \ref{main} }
Let $p$ be prime and let $A$ be a symmetric subset of $\F_p$ with
\begin{equation} \label{main1}
|A\setminus\{0\}| \le 2\log_3(p).
\end{equation}
We have to prove that $A$ has a unique difference. As shown in Section \ref{setup},
we may assume $p\ge 5$.

\medskip

Suppose $A$ does not have a unique difference.
Write $A=\{\pm a_1,\ldots,\pm a_n\}$ as in Section \ref{setup} where $a_n=0$ if $0\in A$.
Recall $m=\lfloor A/2 \rfloor$.
Note $|A\setminus\{0\}|=2\lfloor A/2 \rfloor$. Hence $3^{\lfloor A/2 \rfloor} \le p$
by (\ref{main1}) and thus
\begin{equation} \label{main2}
3^{\lfloor A/2 \rfloor} < p,
\end{equation}
as $p\neq 3$.

\medskip

Let $M$ be the coefficient matrix of the linear system (\ref{system}) and write
$r={\rm rank}_{\Q}(M)$. Note $r\le m$. Let $N$ be a nonsingular $r\times r$ submatrix of $M$.

\medskip

We claim that
\begin{equation} \label{main-1}
|\det(N)|\le 3^r.
\end{equation}
We say that a row vector is of \textbf{type 3} if it has exactly one entry $3$
and all its remaining entries are zero.
Note that $N$ may contain rows of type 3, as some rows of $M$ of type 1 may
turn into type 3 when columns of $M$ are deleted.  If $N$ contains a row of type 3, then,
by Laplace expansion,
$|\det(N)|=3|\det(N_1)|$ where $N_1$ is a $(r-1)\times (r-1)$
submatrix of $M$. Repeating this process,  if necessary, we either get $|\det(N)|=3^r$
or obtain a
$d\times d$ submatrix $N_2$ of $M$ with
\begin{equation} \label{main0}
|\det(N)|=3^{r-d}|\det(N_2)|
\end{equation}
such that $N_2$ does not contain any row of type 3.
If $|\det(N)|=3^r$, then (\ref{main-1}) holds.
Thus we may assume that (\ref{main0}) holds.

\medskip

As $N_2$ is a submatrix of $M$, all rows of $N_2$ which are not of type 1 have 
at most three nonzero entries $\pm 1$, $2$, at most one of which
is $2$. Hence  every row of $N_2$ which
is not of type 1 has Euclidean norm  at most $\sqrt{6}$.

\medskip

Swapping rows, if necessary, we can write
$$N_2={B \choose C}$$
where $B$ consists of the rows of $N_2$ of type $1$
and $C$ of the remaining rows of $N_2$.
Let $I$ be the subset of $\{1,\ldots,m\}$ such that
$3x_i\pm x_{\sigma(i)}=0$, $i\in I$, are
the equations corresponding to the rows of $B$.
Note that $|I|$ is the number of rows of $B$ and $d-|I|$
is the number of rows of $C$.

\medskip

Let $G$ be the directed graph with vertex set $V=I\cup \{\sigma(i):i\in I\}$ and  edge set
$E=\{(i,\sigma(i)): i\in I\}$. Note that $|V|\le d$,
as every element of $V$ is an index of a column of $N_2$.

\medskip

By Lemma \ref{graph}, there is a decomposition of $E$ into pairwise
vertex disjoint directed paths.
Note that these directed paths are connected components of $G$.
Let $t$ be the number of paths in the decomposition.
As $G$ has at most $d$ vertices and contains no cycles, it
has at most $|V|-|E|$ connected components. Hence
\begin{equation} \label{main4}
t\le |V|-|E|\le d-|I|.
\end{equation}
By Corollary \ref{type1bound}, we have
\begin{equation} \label{main5}
\det(BB^T) < 11^{t}9^{|I|-t}.
\end{equation}
From (\ref{main4}) and (\ref{main5}), we get
\begin{equation} \label{main5a}
\det(BB^T) < 11^{d-|I|}9^{|I|-(d-|I|)}=11^{d-|I|}9^{2|I|-d}.
\end{equation}
As the rows of $C$ all have Euclidean norm at most $\sqrt{6}$,
we have
\begin{equation} \label{main6}
\det(CC^T) \le 6^{d-|I|}
\end{equation}
by Result \ref{QR}. From Result \ref{volume}, we get
\begin{equation} \label{main7}
\det(N_2N_2^T) \le \det(BB^T)\det(CC^T).
\end{equation}
Putting (\ref{main5a}-\ref{main7}) together, we find
\begin{align*}
\det(N_2N_2^T) & < 11^{d-|I|}9^{2|I|-d}6^{d-|I|}\\
& = 66^{d-|I|}9^{2|I|-d}\\
& \le 81^{d-|I|}9^{2|I|-d}\\
& = 9^{d}.
\end{align*}
Hence $|\det(N_2)|\le 3^d$ and thus $|\det(N)|\le 3^r$
by (\ref{main0}). This proves (\ref{main-1}).

\medskip

Finally, recall that $\det(N)$ is a nonzero $r\times r$-minor
of $M$. Hence $|\det(N)|\ge p$ by Theorem \ref{snf}.
But $r\le \lfloor |A|/2 \rfloor$ and thus $|\det(N)|\le 3^r<p$ by (\ref{main2}),
a contradiction. This completes the proof of Theorem \ref{main}.
\hfill $\Box$

%%%%%%%%%%%%%%%%%%%%%%%%%%%%%%%%%%%%%%%%%%%%%%%%%%%%%%%%%%%%

\section{Application to Weil Numbers}
Let $m,n$ be positive integers.
Write $\zeta_m = \exp(2\pi i/m)$. An {\boldmath $n$}\textbf{-Weil number}
in $\Z[\zeta_m]$ is an element $Y$ of  $\Z[\zeta_m]$ with $|Y|^2=n$.
In this section, we show that
Theorem \ref{main} implies that under certain conditions Weil numbers in $\Z[\zeta_m]$ necessarily
are contained in proper subfields of $\Q(\zeta_m)$.
This result is a partial improvement of the ``field descent'' introduced in \cite{sc}
and is relevant for the study of difference sets and related objects.
We will assume basic algebraic number theory in this sections, as treated in \cite{ire}, for instance.

\medskip
For a finite group $G$ and a ring $R$, let $R[G]$ denote the group ring of $G$ over $R$.
Every element $B$ of $R[G]$ can be written as $B=\sum_{g\in G} r_gg$ with $r_g\in R$.
The $r_g$'s are called the \textbf{coefficients} of $B$.
We write $B^{(-1)}=\sum_{g\in G} \overline{r_g}g^{-1}$
where $\overline{r_g }$ is the complex conjugate of $r_g$.

\medskip

 For $Y\in \Z[\zeta_m]$, let
$${\cal M}(Y) = \frac{1}{\varphi(m)} \sum_{\sigma\in {\rm Gal}(\Q(\zeta_m)/\Q)} (Y\overline{Y})^{\sigma},$$
where $\varphi$ denote the Euler totient function.
Note
\begin{equation} \label{geoarith}
{\cal M}(Y)\ge 1
\end{equation} for $Y\neq 0$ by the inequality of geometric
and arithmetic means, since $\prod {(Y\overline{Y})^{\sigma}}$ is the norm
of an algebraic integer and thus  $\prod {(Y\overline{Y})^{\sigma}}\ge 1$.
The following is due to Cassels \cite{cas}.

\begin{re} \label{cas} Let $X\in \Z[\zeta_m]$ where $m=pm'$ and  $p$ is a prime with $(p,m')=1$.
Write $X=\sum_{i=0}^{p-1}X_i \zeta_{p}^i$
with $X_i\in \Z[\zeta_{m'}]$. We have
$$(p-1){\cal M}(X) = \sum_{i<j}^{p-1} {\cal M}(X_i-X_j).$$
\end{re}

We denote the cyclic group of order $k$ by $C_k$.

\begin{thm} \label{weil}
Let $p$, $q$ be distinct primes and $r$ be a positive integer
with $\gcd(r,pq)=1$.  Let  $n=q^b$ where $b$ is a positive integer.
Suppose that $Y\bar{Y}=n$ for some $Y \in \Z[\zeta_{pr}]$.
If ${\rm ord}_p(q)$ is even and
$p>\max\{3^{n/2},n^2+n+1\}$, then $Y\eta\in \Z[\zeta_{r}]$ for some root of unity $\eta$.
\end{thm}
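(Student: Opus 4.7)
The plan is to show that if $Y\eta\notin\Z[\zeta_r]$ for every root of unity $\eta$, then a symmetric subset of $\F_p$ derived from $Y$ has no unique difference, contradicting Theorem \ref{main}. Since $\gcd(p,r)=1$, I write $Y=\sum_{i=0}^{p-1}Y_i\zeta_p^i$ with $Y_i\in\Z[\zeta_r]$; this representation is unique up to adding a common constant $c\in\Z[\zeta_r]$ to every $Y_i$ (using $\sum_i\zeta_p^i=0$). Normalize by subtracting the most frequent value of the $Y_i$'s, and put $E=\{i\in\F_p:Y_i\neq 0\}$ with $w_i=Y_i$ for $i\in E$, so $Y=\sum_{i\in E}w_i\zeta_p^i$ with all $w_i\neq 0$. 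Since $Y\bar Y=n\in\Z$, we have $\mathcal{M}(Y)=n$, so Cassels' formula (Result \ref{cas}) yields $(p-1)n=\sum_{i<j}\mathcal{M}(Y_i-Y_j)$. By \eqref{geoarith} each of the $|E|(p-|E|)$ pairs with exactly one index in $E$ contributes at least $1$, giving $|E|(p-|E|)\leq(p-1)n$. The smaller root of the resulting quadratic $t^2-pt+n(p-1)=0$ is strictly less than $n+1$ exactly when $p>n^2+n+1$, so $|E|\leq n$.

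Expanding $Y\bar Y=n$ and setting
\[
c_k:=\sum_{\substack{(i,j)\in E^2\\ i-j\equiv k\bmod p}}w_i\bar w_j,
\]
comparison of coefficients using $\sum_k\zeta_p^k=0$ gives a constant $C\in\Z[\zeta_r]$ with $c_0=C+n$ and $c_k=C$ for every $k\in\F_p^*$. Because $|E-E|\leq|E|^2\leq n^2<p$, some $k\in\F_p^*$ lies outside $E-E$, so $c_k=0$ and hence $C=0$. If any $d\in\F_p^*$ were a unique difference of $E$, the corresponding single pair $(i,j)\in E^2$ would force $c_d=w_i\bar w_j=0$, contradicting $w_i,w_j\neq 0$. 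Thus $E$ has no unique nonzero difference, and when $|E|\geq 2$ no unique difference at $0$ either.

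The crux, and the step I expect to be most delicate, is to prove that $E$ is symmetric. Here the hypothesis $\mathrm{ord}_p(q)$ even is crucial: it is equivalent to $-1\in\langle q\rangle\pmod p$, i.e.\ to complex conjugation lying in the decomposition group of every prime of $\Z[\zeta_p]$ above $q$, so every such prime is self-conjugate. Using this together with the ideal identity $(Y)(\bar Y)=(q^b)$ in $\Z[\zeta_{pr}]$, and tracking how the involution $\sigma_{-1}\in\mathrm{Gal}(\Q(\zeta_{pr})/\Q(\zeta_r))$ defined by $\zeta_p\mapsto\zeta_p^{-1}$ permutes the primes above $q$, one should conclude that the ideal $(Y)$ is $\sigma_{-1}$-invariant. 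Then $\sigma_{-1}(Y)/Y$ is an algebraic integer unit with absolute value $1$ in every embedding, hence a root of unity by Kronecker. Absorbing its $\zeta_p$-part into $Y$ by replacing $Y$ with $Y\zeta_p^{-s}$ for a suitable $s\in\F_p$ (using that $p$ is odd, so $2$ is invertible mod $p$), and observing that the remaining factor must square to $1$, yields $\sigma_{-1}(Y)=\pm Y$; in either case the support satisfies $E=-E$.

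With $E=-E$ and $|E\setminus\{0\}|\leq|E|\leq n<2\log_3(p)$ (the last by $p>3^{n/2}$), Theorem \ref{main} produces a unique difference of $E$, contradicting the Weil constraints of the second paragraph as soon as $|E|\geq 2$. Therefore $|E|\leq 1$; since $Y\neq 0$ forces $|E|=1$, we have $Y=w\zeta_p^s$ with $w\in\Z[\zeta_r]\setminus\{0\}$ and $s\in\F_p$, and then $\eta=\zeta_p^{-s}$ satisfies $Y\eta=w\in\Z[\zeta_r]$.
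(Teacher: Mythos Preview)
Your overall strategy matches the paper's: bound the support $|E|\le n$ via Cassels' formula, show $E$ has no unique difference from the group-ring identity $XX^{(-1)}=n$, prove $E=-E$ using the parity hypothesis on $\mathrm{ord}_p(q)$, and invoke Theorem~\ref{main}. Two points need attention.

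The minor one: from $|E|(p-|E|)\le(p-1)n$ you pass to $|E|\le n$ by taking the smaller root of the quadratic, but you never rule out $|E|$ exceeding the larger root (which is $\approx p-n$). The paper handles this by first showing that the most frequent coefficient value occurs at least $p/2$ times (otherwise the Cassels sum is at least $p^2/4$, forcing $4n>p$, which contradicts $p>n^2+n+1$); this is easy to insert.

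The substantive gap is in the symmetry step. You take $\sigma_{-1}\in\mathrm{Gal}(\Q(\zeta_{pr})/\Q(\zeta_r))$ with $\zeta_p\mapsto\zeta_p^{-1}$, $\zeta_r\mapsto\zeta_r$, and assert that $(Y)$ is $\sigma_{-1}$-invariant because primes of $\Z[\zeta_p]$ over $q$ are self-conjugate. But self-conjugacy downstairs does not lift: identifying $\mathrm{Gal}(\Q(\zeta_{pr})/\Q)$ with $(\Z/p\Z)^*\times(\Z/r\Z)^*$, the decomposition group at $q$ is $\langle(q,q)\rangle$, and $\sigma_{-1}=(-1,1)$ lies in it iff some $k$ satisfies $q^k\equiv-1\pmod p$ and $q^k\equiv 1\pmod r$. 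This fails, for instance, whenever $\mathrm{ord}_r(q)$ is even while $f=\tfrac12\,\mathrm{ord}_p(q)$ is odd. In such cases $\sigma_{-1}$ genuinely permutes the primes of $\Z[\zeta_{pr}]$ above $q$ nontrivially, and there is no reason the particular ideal $(Y)$ must be fixed; your phrase ``one should conclude'' hides exactly the step that does not go through. The paper sidesteps this by using instead $\sigma:\zeta_{pr}\mapsto\zeta_{pr}^{q^f}$, the $f$-th power of the Frobenius at $q$. Being a power of Frobenius, $\sigma$ fixes every prime above $q$ and hence fixes $(Y)$ automatically; and since $q^f\equiv-1\pmod p$ it still inverts $\zeta_p$, which is all that is needed for the support to be symmetric. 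One then obtains $Y_1^{\sigma}=\pm\zeta_r^{d}Y_1$ after absorbing a $\zeta_p$-power (not $\pm Y_1$, since this $\sigma$ need not be an involution), but multiplication by $\pm\zeta_r^{d}$ preserves vanishing of the $\Z[\zeta_r]$-coefficients, so $E=-E$ follows just the same.
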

\begin{proof}
Write ${\rm ord}_p(q)=2f$, and define
$\sigma\in {\rm Gal}(\Q(\zeta_{pr})/\Q)$ by $\zeta_{pr}^{\sigma} = \zeta_{pr}^{q^f}$.
Note that  $\sigma$ fixes all prime ideals above $n$ in $\Z(\zeta_{pr})$ (see \cite[Thm.\ 2.1]{sc}, for instance).
Hence $Y^{\sigma}= Y\alpha$ for some unit $\alpha$.
Note $|\alpha|^2=Y^{\sigma}\overline{Y^{\sigma}}/(Y\overline{Y})=n^{\sigma}/n=1$.
Thus $\alpha$ is a root of unity, i.e., $\alpha=\pm \zeta_p^c\zeta_r^d$ for some integers $c,d$.
Let $e$ be an integer with $2e\equiv c \pmod{p}$.
Note that $\zeta_p^{\sigma}=\zeta_p^{-1}$ and thus
$$(Y\zeta_p^e)^{\sigma}
= Y(\pm \zeta_p^c\zeta_r^d)\zeta_p^{-e}
= Y (\pm \zeta_p^{c-e}\zeta_r^d)
= Y( \pm \zeta_p^{e}\zeta_r^d).$$
Let $Y_1 = Y\zeta_p^e$.
Then
\begin{equation}\label{eq2}
Y_1^{\sigma} =  Y_1(\pm \zeta_r^d).
\end{equation}
Write $Y_1=\sum_{i=0}^{p-1}a_i\zeta_{p}^i$
with $a_i\in \Z[\zeta_r]$.
Note ${\cal M}(Y_1)={\cal M}(Y)=n$.
By Result \ref{cas}, we have
\begin{equation}\label{eq4}
(p-1)n= (p-1){\cal M}(Y_1) = \sum_{i<j}^{p-1} {\cal M}(a_i-a_j).
\end{equation}
Let $t$ be the maximum number such that there are
distinct indices $i_1,\ldots,i_t$ with $a_{i_1}=\cdots=a_{i_t}$.
If $t\le p/2$, then, by (\ref{geoarith}), the right hand side of (\ref{eq4}) is at least $p^2/4$
and thus $4n>p$, contradicting the assumption $p>n^2+n+1$.
Hence $t\ge p/2$. Note that, by (\ref{geoarith}), the right hand side of (\ref{eq4}) is
at least  $(p-t)t$. Hence $(p-t)t \le n(p-1)$. Note that $(p-t)t$ is decreasing for $t\in [p/2,p]$
and that $(p-(p-n-1))(p-n-1) = n(p-1)+ p -n-1-n^2>n(p-1)$, as $p>n^2+n+1$.
Hence $t\ge p-n$. Recall that $a_{i_1}=\cdots=a_{i_t}$.
Note $Y_1 = \sum_{i=0}^{p-1}(a_i-a_{i_1})\zeta_{p}^i$. Thus,
writing $b_i = a_i-a_{i_1}$, we have $Y_1 = \sum_{i=0}^{p-1}b_i\zeta_{p}^i$
and $|\{i: b_i\neq 0 \}|\leq n$. If $|\{i: b_i\neq 0 \}|=1$, then the assertion of Theorem \ref{weil} holds.
Hence, to complete the proof, it suffices to show that
\begin{equation}\label{eq1}
2\le |\{i: b_i\neq 0 \}|\leq n
\end{equation}
leads to a contradiction.
Define $X=\sum b_ig^i$ where $g$ is a generator of $C_p$, the cyclic group of order $p$.
Let $K$ be the kernel of the ring homomorphism $\rho: \Z[\zeta_r][C_p]\to \Z[\zeta_{pr}]$
determined by $g\mapsto \zeta_p$. It is well known and straightforward
to verify that $K=\{AC_p: A\in \Z[\zeta_r][C_p]\}$.
Note that $\rho(X)=Y_1$, $\rho(X^{(-1)})=\overline{Y_1}$
and thus  $\rho(XX^{(-1)})=Y_1\overline{Y_1}=n$.  Hence
\begin{equation}\label{Y}
XX^{(-1)} = n + AC_p
\end{equation}
for some $ A\in \Z[\zeta_r][C_p]$.
Suppose $A\neq 0$.  Then there are at least $p-1$ nonzero
coefficients on the right hand side of (\ref{Y}).
On the  other hand, by (\ref{eq1}), there are at most $n(n-1)$ nonzero coefficients
on the left hand side of (\ref{Y}).
Hence $p \le n(n-1)+1 $ which contradicts the assumptions.
Thus $A=0$ and hence
\begin{equation}\label{Y1}
XX^{(-1)} = n.
\end{equation}
Recall that $X=\sum b_ig^i$. By (\ref{eq1}),
we can write $X=\sum_{j=1}^z b_{i_j} g^{i_j}$ with $2\le z\le n$ and
$b_{i_j}\neq 0$ for all $j$.
Note
\begin{equation}\label{Y3}
XX^{(-1)} = \sum_{k=0}^{p-1} \left( \sum_{i_r-i_s\equiv k \bmod p} b_{i_r}\overline{b_{i_s}}\right)g^k.
\end{equation}
Write  $S=\{i_1,\ldots,i_z\}$ and view $S$ as a subset of $\F_p$.
Note $2\le |S|\le  n$.
Suppose $S$ has a unique difference, say, $k=i_r-i_s\in \F_p$ is a unique difference in $S$.
Note that $k\neq 0$, as $|S|\ge 2$ and thus $0$ is not a unique difference in $S$.
In view of (\ref{Y3}),  the coefficient of $g^k$ in $XX^{(-1)}$ is nonzero.
But this contradicts $XX^{(-1)}=n$. Thus  $S$ has no unique difference.

\medskip

Using (\ref{eq2}), we get
$$Y_1^{\sigma} = \sum_{i=0}^{p-1}b_i^{\sigma}\zeta_{p}^{-i} =  \pm \zeta_r^dY_1
= \pm \zeta_r^d \sum_{i=0}^{p-1}b_i\zeta_{p}^{i}
$$
 Hence, for $i\neq 0$, we have $b_i\neq 0$ if
and only if $b_{p-i}\neq 0$. This implies that $S$ is symmetric.
As $S$ has no unique difference, we have $p\le 3^{|S|/2}$
by Theorem \ref{main}. As $|S|\le n$,
we conclude $p\le 3^{n/2}$,  contradicting the assumptions.
\end{proof}

\bigskip
\noindent
{\bf Example} Let $p=107$, $q=2$, 
and $n=8$ in Theorem \ref{weil}. Note that ${\rm ord}_{p}(2)$ is even
and that $p>\max\{3^{n/2},n^2+n+1\}$. Thus, for any prime odd prime $r$ and $Y\in \Z[\zeta_{107r}]$
with $|Y|^2=8$, we have $Y\eta\in \Z[\zeta_{r}]$ for some root of unity $\eta$.

\bigskip

\noindent
{\bf Acknowledgement}
We are grateful to the referees for their careful reading 
of the paper and suggestions which improved the exposition
of the paper.

%%%%%%%%%%%%%%%%%%%%%%%%%%%%%%%%%%%%%%%%%%%%%%%%%%%%%%%%%

\end{document}